\definecolor{immi}{rgb}{0,.6,.1}
\newbox\removebox
\newcommand\remove[2]{%
\setbox\removebox=\ifmmode\hbox{$#2$}\else\hbox{#2}\fi%
\leavevmode
\rlap{\textcolor{#1}{\vrule height0.8ex depth-0.6ex width\wd\removebox}}%
\box\removebox
}
\long\def\bigremove#1{%
\par\setbox\removebox=\vbox{#1}%
\vbox{%
\vbox to0pt{\hbox{\tikz\draw[color=blue,thick] (0,0) -- (\wd\removebox,-\ht\removebox)  (\wd\removebox,0) -- (0,-\ht\removebox);}}
\box\removebox
}
}
\def\RFss@@#1{\RF^*_{\!*#1}}
\def\RFss@_#1{\RFss@@{,#1}}
\def\RFss{\@ifnextchar_{\RFss@}{\RFss@@{}}}
\newcommand{\RF}{{\rm RF}}
\def\11{{\mathbf 1}}
\newtheorem{thm}[subsection]{Theorem}
\newtheorem{lem}[subsection]
{Lemma}
{Corollary}
\theoremstyle{plain}
\newtheorem*{namedthm}{\namedthmname}
\newcounter{namedthm}
\theoremstyle{definition}
\theoremstyle{remark}
{Remark}
\newtheorem{rem}[subsection]
{Remark}
{Remarks}
\theoremstyle{plain}
\renewcommand{\phi}{\varphi}
\renewcommand{\epsilon}{\varepsilon}
\renewcommand{\theta}{\vartheta}
\renewcommand{\and}{ \quad \text{and} \quad }
\begin{document}

\setcounter{tocdepth}{1} 

\author[T.~Thu~Nguyen]
{Thi Thu Nguyen}
 \address{Université de Lille, Laboratoire Paul Painlevé, 
59655 Villeneuve d'Ascq Cedex, France}
\email{thuclcsphn@gmail.com}
\email{thithu.nguyen@univ-lille.fr}
\subjclass[2020]{11P32, 11M26}
\keywords{Goldbach numbers, Riemann Hypothesis}

\begin{abstract} We study an asymptotic formula for average orders of Goldbach representations of an integer as the sum of $k$ primes. We extend the existing result for $k=2$ to a general $k$, for which we obtain a better error term. Moreover, we prove an equivalence between the Riemann Hypothesis and a good average order in this case.

\end{abstract}

\title[Sums of several primes]{Goldbach Representations with several primes} 

\maketitle

\section{Introduction}\label{sec:intro}

Let $k\geq 2$ be an integer, we define the weighted Goldbach function in the general case
$$G_k(n) =\sum_{n_1+\dots+n_k=n}\Lambda(n_1)\dots\Lambda(n_k),$$
where $\Lambda(n)$ is the von Mangoldt function. For $k\geq 2$, the expected asymptotic formula for $G_k(n)$ is of the form
\begin{align*}
    G_k(n)=\dfrac{n^{k-1}}{(k-1)!} \mathfrak{S}_k(n)+ \text{error term},
\end{align*}
where 
$$\mathfrak{S}_k(n):= \prod_{p\mid n}\left( 1-\left(\dfrac{-1}{p-1}\right)^{k-1}\right)\prod_{p\nmid n}\left( 1-\left(\dfrac{-1}{p-1}\right)^{k}\right),$$ (see \cite{FG}).
The first result of this type was studied by Hardy and Littlewood \cite{HL} which inspired Vinagradov \cite{Vi} for $k=3$. Friedlander and Goldston \cite{FG} established this bound for each $k \geq 5$ and got a slightly weaker estimate for $k = 3$ and $4$.
 Moreover, Friedlander and Goldston (Corollary, \cite{FG}) proved that for $k\geq 5$ the Generalized Riemann Hypothesis is equivalent to the estimate
\begin{align*}
     G_k(n)=\dfrac{n^{k-1}}{(k-1)!} \mathfrak{S}_k(n)+ \mathcal{O}(n^{k-3/2}).
\end{align*} 
In this paper, we will consider its average order, denoted by 
$$S_k(X):= \sum_{n\leq X} G_k(n).$$ Studying such average orders is a standard practice in analytic number theory. Here a special motivation is that according to Granville \cite{granville1, granville2}, the average order of $G_k(n)$ can be related to the Riemann Hypothesis (RH). Languasco and Zaccagnini \cite{Lang-Zacca1} proved the following asymptotic result  for $S_2(X)$, under the RH.
\begin{thm}\label{0}
    Assuming the RH and let $X \geq 2$. Then 
    $$S_2(X)=\dfrac{X^2}{2}-2\sum_\rho\dfrac{X^{\rho+1}}{\rho(\rho+1)}+\mathcal{O}(X\log^3X).$$
\end{thm}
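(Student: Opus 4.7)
My plan is to reduce $S_2(X)$ to a convolution involving $\psi(y) := \sum_{n\leq y}\Lambda(n)$ and then exploit the explicit formula for $\psi$ under RH. Interchanging summations gives
$$S_2(X) = \sum_{m \leq X}\Lambda(m)\,\psi(X-m),$$
and I would write $\psi(y) = y + R(y)$ with $R(y) := \psi(y) - y$, so that $S_2(X)$ splits into a main piece $\sum_m \Lambda(m)(X-m)$ and a cross piece $\sum_m \Lambda(m)R(X-m)$.

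For the main piece, partial summation gives $\sum_{m\leq X}\Lambda(m)(X-m) = \int_0^X \psi(t)\,dt$; substituting the truncated explicit formula for $\psi$ at height $T = X$ (valid under RH) and integrating termwise yields
$$\int_0^X \psi(t)\,dt = \frac{X^2}{2} - \sum_\rho \frac{X^{\rho+1}}{\rho(\rho+1)} + O(X\log^2 X).$$
For the cross piece, I expand $R(X-m)$ by the explicit formula and swap summations; partial summation then gives $\sum_{m\leq X}\Lambda(m)(X-m)^\rho = \rho\int_0^X \psi(t)(X-t)^{\rho-1}\,dt$, and splitting $\psi(t) = t + R(t)$ together with the Beta integral $\int_0^X t(X-t)^{\rho-1}dt = X^{\rho+1}/(\rho(\rho+1))$ produces a second copy of $-\sum_\rho X^{\rho+1}/(\rho(\rho+1))$. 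Combining,
$$S_2(X) = \frac{X^2}{2} - 2\sum_\rho \frac{X^{\rho+1}}{\rho(\rho+1)} - E(X) + O(X\log^2 X), \qquad E(X) := \sum_\rho \int_0^X R(t)(X-t)^{\rho-1}\,dt,$$
so the theorem reduces to the bound $E(X) = O(X\log^3 X)$.

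Controlling $E(X)$ is the main obstacle. The pointwise bound $|R(t)| \ll t^{1/2}\log^2 t$ gives only $O(X\log^2 X)$ for each integral, which diverges when summed over $\rho$, so the gain must come from oscillation. I would expand $R(t)$ by a second truncated explicit formula and reduce $E(X)$ to the double zero-sum
$$\sum_{\rho_1,\rho_2} \frac{X^{\rho_1+\rho_2}\,\Gamma(\rho_1)\Gamma(\rho_2)}{\Gamma(\rho_1+\rho_2+1)}$$
via the identity $\int_0^X t^{\rho_1}(X-t)^{\rho_2-1}dt = X^{\rho_1+\rho_2}B(\rho_1+1,\rho_2)$. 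By Stirling, the summand has modulus $\asymp (|\gamma_1|+|\gamma_2|)^{-3/2}$ when $\gamma_1,\gamma_2$ share a sign, and is exponentially small otherwise; absolute convergence still fails, so the required saving must come from the oscillation $X^{i(\gamma_1+\gamma_2)}$. Truncating carefully at a height $T$ and using the Riemann--von Mangoldt estimate $N(T) \sim \frac{T}{2\pi}\log T$ together with the standard bound $\sum_{|\gamma|\leq T}|\gamma|^{-1} \ll \log^2 T$ should balance the truncation against the tail and deliver $E(X) \ll X\log^3 X$, completing the proof.
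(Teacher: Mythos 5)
Your reduction is sound as far as it goes: writing $S_2(X)=\sum_{m\le X}\Lambda(m)\psi(X-m)$, extracting $\int_0^X\psi(t)\,dt$ as the main piece, and identifying the remainder with the double zero sum $\sum_{\rho_1,\rho_2}X^{\rho_1+\rho_2}\Gamma(\rho_1)\Gamma(\rho_2)/\Gamma(\rho_1+\rho_2+1)$ is correct, and it is where every treatment of this statement ends up. But the last step --- the bound $E(X)\ll X\log^3X$ --- is the entire content of the theorem, and the mechanism you propose for it does not work. Estimating the same-sign pairs in absolute value, the truncated sum over $|\gamma_1|,|\gamma_2|\le T$ costs about $X\,T^{1/2}\log^2T$ (since $\sum_{0<\gamma_1,\gamma_2\le T}(\gamma_1+\gamma_2)^{-3/2}\asymp T^{1/2}\log^2T$), while the truncation error in the explicit formula costs about $X^2\log^2X/T$; balancing gives $T\asymp X^{2/3}$ and an error of order $X^{4/3+o(1)}$, which is precisely Fujii's classical bound $O((X\log X)^{4/3})$ and is far weaker than $X\log^3X$. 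The single-sum estimate $\sum_{|\gamma|\le T}|\gamma|^{-1}\ll\log^2T$ cannot rescue this, because the Beta factor couples $\gamma_1$ and $\gamma_2$ through $|\gamma_1+\gamma_2|^{-3/2}$ and the double sum does not factor; and no known technique extracts cancellation from the phases $X^{i(\gamma_1+\gamma_2)}$ at a \emph{fixed} $X$ --- that would require pair-correlation information about the ordinates.

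The proofs that actually reach $X\log^3X$ (Languasco--Zaccagnini \cite{Lang-Zacca1}, Goldston--Yang \cite{GY}; cf.\ \cite{BP} for $X\log^5X$) obtain the saving not from oscillation in $X$ but from an $L^2$ average: Parseval in $\alpha$ applied to $\sum_{n\le x}(\Lambda(n)-1)e(n\alpha)$, together with an average over $x\in[X,2X]$, as encapsulated in the expected-value estimate $\int_{-1/2h}^{1/2h}E_X(|S_0(\alpha)|^2)\,d\alpha\ll X\log^2X/h$ of Lemma \ref{EX}. Such averaging converts the double zero sum into sums of the shape $\sum_{\gamma_1,\gamma_2}|\rho_1\rho_2|^{-1}(1+|\gamma_1-\gamma_2|)^{-1}$, which converge by the local density bound $N(T+1)-N(T)\ll\log T$. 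Without an averaging device of this kind your argument stalls at $X^{4/3+o(1)}$, so the proposal has a genuine gap at its decisive step. (Note also that the present paper does not reprove this statement: it quotes it from \cite{Lang-Zacca1} and uses it as the base case of the induction proving Theorem \ref{1}.)
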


We expect a similar formula in the general case  $k \geq 2$. Languasco and Zaccagnini \cite{Lang-Zacca1} stated an asymptotic formula of $S_k(X)$ for $k\geq 3$, with the error term  $\mathcal{O}_k(X^{k-1}\log^kX)$ but did not prove it in detail. In fact, we can prove this result by using the original Hardy and Littlewood circle method with the infinite exponential sum.
Note that Theorem \ref{0} was later proved by another method \cite{GY}, which uses the finite exponential sum as studied by Bhowmik and Schlage-Puchta \cite{BP}. Here we improve the asymptotic formula of $S_k(X)$ in \cite{Lang-Zacca1} with the method of Goldston and Yang \cite{GY}, who showed a good estimate of the expected value function (see Lemma \ref{EX}). While Languasco and Zaccagnini used Lemma \cite{LZ} for their bound, we use a trivial bound (see Lemma \ref{S_0}).   Surprisingly we get a much better estimate, the $\log$-power is always $3$ for all $k \geq 2$.

\begin{thm}\label{1}
    Let $k\geq 2$, $X\geq k$ and assume the RH holds. Then we have 
    $$S_k(X)=\dfrac{X^k}{k!}+H_k(X)+\mathcal{O}_k(X^{k-1}\log^3X),$$
 with $$H_k(X)=-k\sum_\rho\dfrac{X^{\rho+k-1}}{\rho(\rho+1)...(\rho+k-1)},$$
 where $\rho$ are the non-trivial zeros of  Riemann zeta function with $Re(\rho)=1/2$.
\end{thm}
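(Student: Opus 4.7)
I would follow the finite exponential sum approach of Goldston--Yang. Set $\tilde S(\alpha):=\sum_{n\leq X}\Lambda(n)e(n\alpha)$ and $T(\alpha):=\sum_{n\leq X}e(n\alpha)$. Because every representation of $n\leq X$ as a sum of $k$ positive integers automatically satisfies $n_i\leq n\leq X$, Fourier inversion gives $G_k(n)=\int_0^1\tilde S(\alpha)^k e(-n\alpha)\,d\alpha$, and summing over $n\leq X$ yields
\[
S_k(X)\;=\;\int_0^1\tilde S(\alpha)^k\,\overline{T(\alpha)}\,d\alpha.
\]
Writing $\tilde S=T+W$ with $W(\alpha):=\sum_{n\leq X}(\Lambda(n)-1)e(n\alpha)$ and expanding binomially gives $S_k(X)=\sum_{j=0}^k\binom{k}{j}I_j$, where $I_j:=\int_0^1 T^{k-j}W^j\overline T\,d\alpha$.

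The integral $I_0$ counts $k$-tuples of positive integers with sum at most $\lfloor X\rfloor$, so $I_0=\binom{\lfloor X\rfloor}{k}=\frac{X^k}{k!}+O_k(X^{k-1})$, producing the principal main term. For $j=1$, extracting the zeroth Fourier coefficient gives
\[
I_1\;=\;\sum_{m\leq X}(\Lambda(m)-1)\binom{\lfloor X-m\rfloor}{k-1}\;=\;\int_0^X\frac{(X-t)^{k-1}}{(k-1)!}\,d\Phi(t)\;+\;O_k(X^{k-1}),
\]
where $\Phi(t):=\psi(t)-t$. Integration by parts (the boundary terms vanish because $\Phi(0)=0$ and $(X-t)^{k-1}|_{t=X}=0$) reduces this to $\int_0^X\frac{(X-t)^{k-2}}{(k-2)!}\Phi(t)\,dt$. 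Substituting the RH explicit formula $\Phi(t)=-\sum_\rho t^\rho/\rho+O(1)$ together with the Beta evaluation $\int_0^X(X-t)^{k-2}t^\rho\,dt=(k-2)!\,X^{\rho+k-1}/\prod_{i=1}^{k-1}(\rho+i)$ yields $kI_1=H_k(X)+O_k(X^{k-1})$; the interchange of the $\rho$-sum with the integral is legitimate since $\sum_\rho|\rho(\rho+1)\cdots(\rho+k-1)|^{-1}$ converges absolutely for $k\geq 2$.

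The remaining task is to bound $\sum_{j\geq 2}\binom{k}{j}I_j=O_k(X^{k-1}\log^3 X)$. My plan is to apply Parseval and Cauchy--Schwarz on the Fourier side, invoking the trivial bound Lemma \ref{S_0} for each factor of $W$ and Lemma \ref{EX} to handle the expected value contribution. The key observation is that Lemma \ref{S_0} is a crude $L^\infty$-style estimate that does not pick up an extra $\log$ for each additional factor of $W$ absorbed; by contrast, the refined estimate of Languasco--Zaccagnini would produce $\log^k$. The main obstacle is precisely this uniformity: one must verify that iterating Lemma \ref{S_0} on $T^{k-j}W^j\overline T$ produces the same $\log^3$ bound for every $j\geq 2$, which amounts to a careful balancing of Fourier mass between the smooth Dirichlet kernel $T^{k-j}\overline T$ and the RH-controlled factor $W^j$. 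Once this uniform bound is secured, assembling $I_0$, $kI_1=H_k(X)+O_k(X^{k-1})$, and the $O_k(X^{k-1}\log^3 X)$ error from the $j\geq 2$ terms yields the theorem.
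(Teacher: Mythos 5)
Your decomposition is genuinely different from the paper's and arguably cleaner. You expand $\tilde S^k=(T+W)^k$ so that every mixed term carries factors of $\Lambda_0=\Lambda-1$; this concentrates all main terms in $j=0$ (giving $X^k/k!$) and $j=1$ (giving $H_k(X)$ via the explicit formula for $\psi_{k-1}$), and makes every $j\geq 2$ term pure error. The paper instead expands $\prod_i(\Lambda(n_i)-1)$ and solves for $G_k$, leaving correction sums $\sum_{n\le X}\sum\Lambda(n_{i+1})\cdots\Lambda(n_k)$ that still carry main terms of size $X^k$; evaluating those forces an induction on $k$ (using the already-proved asymptotic for $S_{k-i}$) and a final cancellation of main terms via the identities $\sum_i(-1)^i\binom ki=0$ and $\sum_i(-1)^i\binom ki(k-i)=0$. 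Your route avoids both the induction and the cancellation. The ``obstacle'' you flag for $j\ge2$ does dissolve, by exactly the dyadic argument the paper applies to its own pure-$W^k$ integral: bound $|W|^j\le X^{j-2}|W|^2$ by Lemma \ref{S_0} and $|T^{k-j}\overline T|\le\min(X,\|\alpha\|^{-1})^{k-j+1}$; then on the dyadic block $\|\alpha\|\asymp 2^i/X$ Lemma \ref{EX} gives a contribution $\ll X^{j-2}(X2^{-i})^{k-j+1}\,2^{i}\log^2X$, whose sum over $i$ is $\ll X^{k-1}\log^2X$ for $2\le j<k$ and $\ll X^{k-1}\log^3X$ for $j=k$ --- uniform in $j$, as required. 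Two points you should make explicit to close the argument: (i) Lemma \ref{EX} concerns the average $E_X(|S_0(\alpha)|^2)=\frac1X\int_X^{2X}|S_0(\alpha,x)|^2\,dx$, not $|W(\alpha)|^2$ at the fixed cutoff $X$, so you must first note (as the paper does for its $I_0$) that $\int_0^1T^{k-j}W^j\overline T\,d\alpha$ is unchanged when the cutoff in $T^{k-j}W^j$ is raised from $X$ to any $x\in[X,2X]$, and then average over $x$; (ii) in the $j=1$ term, $\sum_\rho t^\rho/\rho$ converges only conditionally, so the interchange of sum and integral should be routed through the truncated explicit formula, or, as the paper does, through the absolutely convergent formula for $\psi_1$ and the relation $\psi_j(x)=\int_0^x\psi_{j-1}(t)\,dt$. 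With these supplied, your proof is complete and yields the same error term.
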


We note that the error term in Theorem \ref{0} is essentially the best possible because we know the error term is $\Omega(X\log \log X)$\cite{BP}.
Similar to the case $k=2$, we expect an omega-result of the average order in the general case as was studied by Bhowmik, Schlage-Puchta \cite{BP-omega}, who proved the error term is $\Omega(X^{k-1})$, while \cite{BRP} shows a similar result for the error term to be $\Omega_\pm(X^{k-1})$. In this paper,  we prove the following result.
\begin{thm}\label{omega}
    Let $k\geq 2$, we have 
    $$S_k(X)=\dfrac{X^k}{k!}+H_k(X)+ \Omega(X^{k-1}\log \log X).$$
\end{thm}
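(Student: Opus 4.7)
I would proceed by induction on $k$, with the base case $k=2$ being the theorem of Bhowmik and Schlage-Puchta \cite{BP-omega} quoted in the introduction.

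For the step $k-1\to k$, I would exploit the convolution $G_k(n)=\sum_{m+\ell=n}G_{k-1}(m)\Lambda(\ell)$ together with Abel summation to write
$$
S_k(X)=\sum_{\ell\le X}\Lambda(\ell)\,S_{k-1}(X-\ell)=\int_0^X S_{k-1}(X-y)\,d\psi(y),
$$
where $\psi(y)=\sum_{n\le y}\Lambda(n)$. Splitting $d\psi(y)=dy+d(\psi(y)-y)$, the smooth part gives $\int_0^X S_{k-1}(u)\,du$; plugging in the asymptotic of Theorem~\ref{1} at level $k-1$ and using the beta-type identity
$$
\int_0^X u^{\rho+k-2}\,du=\frac{X^{\rho+k-1}}{\rho+k-1}
$$
one sees that this piece yields $X^k/k!$, a proportion $(k-1)/k$ of $H_k(X)$, and $\int_0^X E_{k-1}(u)\,du$. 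The singular part, handled by integration by parts and a second application of Theorem~\ref{1}, supplies the remaining fraction of $H_k(X)$ together with an error $O_k(X^{k-3/2}\log^C X)$ by the RH bound $\psi(y)-y\ll y^{1/2}\log^2 y$. Collecting,
$$
E_k(X)=\int_0^X E_{k-1}(u)\,du+O_k\bigl(X^{k-3/2}\log^C X\bigr),
$$
and since $X^{k-3/2}\log^C X=o(X^{k-1}\log\log X)$, it suffices to show $\int_0^X E_{k-1}(u)\,du=\Omega(X^{k-1}\log\log X)$.

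The main obstacle lies here: pointwise $\Omega$-estimates can be washed out by integration. Rather than invoking the inductive bound directly, I would revisit the mechanism behind \cite{BP-omega}. Their lower bound $E_2(X_n)\gg X_n\log\log X_n$ is produced by a Dirichlet--Kronecker phase-alignment argument along a sequence $X_n\to\infty$, applied to the ordinates $\gamma$ of non-trivial zeros of $\zeta$ in a short window $(T_n,2T_n]$ with $T_n=(\log X_n)^{1+o(1)}$. The quantity $E_{k-1}(u)$ admits a Perron-type representation involving the same phases $u^{i\gamma}$, weighted by the multiplier $1/((\rho+1)\cdots(\rho+k-2))$, which is essentially constant across the window. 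Hence the same alignment construction applies at level $k-1$, and the subsequent integration over $u\in[X_n/2,X_n]$ multiplies each summand by a factor of comparable size, uniformly in $\gamma$, so that phase coherence is preserved. The technical heart is verifying that the $\gamma$-dependence of the multiplier is mild enough within $(T_n,2T_n]$ not to destroy constructive interference; once this is checked, the resulting lower bound $\int_0^{X_n}E_{k-1}(u)\,du\gg X_n^{k-1}\log\log X_n$ completes the induction and yields $E_k(X)=\Omega(X^{k-1}\log\log X)$.
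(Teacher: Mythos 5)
Your reduction of the theorem to the single estimate $\int_0^X E_{k-1}(u)\,du=\Omega(X^{k-1}\log\log X)$ is where the argument breaks down, and you have in fact flagged the danger yourself: a pointwise $\Omega$-bound for the error at level $k-1$ is, in the Bhowmik--Schlage-Puchta construction, attained only at a sparse sequence of highly composite integers, and gives no control over its integral. Your proposed repair also misidentifies the mechanism behind the $\log\log$ factor: in \cite{BP} it does not come from phase alignment of the ordinates of zeta zeros (that device yields $\Omega$- and $\Omega_\pm$-results of size $X^{k-1}$ only, as in \cite{BP-omega} and \cite{BRP}), but from the arithmetic fact that the singular series is as large as a constant times $\log\log n$ when $n$ is divisible by all small primes --- equivalently, from Mertens' bound $q/\phi(q)\gg\log\log x$ for $q=\prod_{p<x}p$. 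Since that lower bound lives only on multiples of $q$, there is no reason the constructive interference you invoke survives integration over $u\in[X_n/2,X_n]$, and the proposal does not close this gap.

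The paper avoids the issue by never integrating. It proves the pointwise bound $\max_{n\le 2kx}G_k(n)\gg x^{k-1}\log\log x$ directly and unconditionally: iterating the lower bound $\psi(2x,q,a)\ge x/(2\phi(q))$ for $(a,q)=1$ (Lemma 4 of \cite{BP}) through the convolution structure of $G_k$ gives $\sum_{n\le 2kx,\ q\mid n}G_k(n)\ge x^{k}/(2^{k}\phi(q))$, and the pigeonhole principle over the roughly $2kx/q$ multiples of $q$ up to $2kx$ produces a single $n$ with $G_k(n)\gg x^{k-1}\,q/\phi(q)\gg x^{k-1}\log\log x$. The transfer to $S_k$ then uses the difference $G_k(n)=S_k(n)-S_k(n-1)$ rather than an integral, so nothing is washed out. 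If you want to salvage your inductive scheme, it must be run at the level of $G_k$ (where the convolution with $\Lambda$ does propagate the lower bound along multiples of $q$, which is exactly what the paper does) rather than at the level of $S_k$.
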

To do that, we use the idea in \cite{BP} for $k=2$. We show that for $n$ sufficiently large, $G_k(n)=\Omega(n^{k-1}\log \log n$). Then Theorem \ref{omega} will be proved because if Theorem \ref{omega} is false, that means
\begin{align*}
     S_k(n)=\dfrac{n^k}{k!}+H_k(n)+ o(n^{k-1}\log \log n).
\end{align*}
This implies
\begin{align*}
    G_k(n)=S_k(n)-S_k(n-1)=o(n^{k-1}\log \log n).
\end{align*}

For $k=2$, we know that there is a good relation between the Riemann Hypothesis and the average order, the RH is equivalent to the estimation $$S_2(X)=\dfrac{X^2}{2}+\mathcal{O}(X^{3/2+\epsilon}),$$ for any $\epsilon >0$, as mentioned in \cite{BHMS} and \cite{Bhowmik-Ruzsa}. The method of \cite{Bhowmik-Ruzsa} was generalized in \cite{BCSS} to obtain a zero-free region for the Riemann zeta-function.

In this paper, we prove that for $k\geq 2$, a good estimation of $S_k(X)$ is equivalent to the Riemann Hypothesis. 
\begin{thm}\label{2}
    Let $k\geq 2$ and $X\geq k$, the RH is equivalent to
    $$S_k(X)=\dfrac{X^k}{k!}+\mathcal{O}_k(X^{k-1/2+\epsilon}),$$ for any $\epsilon >0$.
\end{thm}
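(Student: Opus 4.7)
For the forward direction (RH implies the asymptotic), the plan is to invoke Theorem~\ref{1} and then bound $H_k(X)$ directly. Under RH, Theorem~\ref{1} gives
$$S_k(X) = \frac{X^k}{k!} + H_k(X) + O_k(X^{k-1}\log^3 X),$$
so it suffices to show $H_k(X) \ll_k X^{k-1/2}$. Writing $\rho = 1/2 + i\gamma$ under RH, we have $|X^{\rho+k-1}| = X^{k-1/2}$ and $|\rho(\rho+1)\cdots(\rho+k-1)| \gg (1+|\gamma|)^k$. The Riemann--von Mangoldt estimate $N(T) \ll T\log T$ implies that $\sum_\rho (1+|\gamma|)^{-k}$ converges for every $k \geq 2$, giving $H_k(X) \ll_k X^{k-1/2}$, which is in fact stronger than the claimed bound with any $\epsilon > 0$.

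For the reverse direction, the first step is to establish \emph{unconditionally} a formula of the same shape as Theorem~\ref{1}:
$$S_k(X) = \frac{X^k}{k!} - k\sum_\rho \frac{X^{\rho+k-1}}{\rho(\rho+1)\cdots(\rho+k-1)} + O_k(X^{k-1}\log^A X),$$
for some constant $A$, with the sum running over all non-trivial zeros of $\zeta$ in a suitable symmetric truncated sense. The approach is to rerun the Goldston--Yang-based argument of Theorem~\ref{1} without assuming RH: the Perron-type contour shift naturally collects residues at every non-trivial zero (not only those on the critical line), and the remainder integral produces the stated error independently of the location of the zeros; one also collects residues at $s = 0,-1,\ldots,-(k-1)$, which contribute lower-order polynomial terms absorbed into the error.

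Subtracting the hypothesized asymptotic from this explicit formula yields
$$\Phi(X) := \sum_\rho \frac{X^\rho}{\rho(\rho+1)\cdots(\rho+k-1)} = O_{k,\epsilon}(X^{1/2+\epsilon})$$
for every $\epsilon > 0$. A Landau-type oscillation argument then applies: $\Phi(X)$ is, up to elementary terms, the Mellin inverse of $(-\zeta'(s)/\zeta(s))/(s(s+1)\cdots(s+k-1))$, whose only poles in $\{\mathrm{Re}(s) > 0\}$ are $s=1$ and the non-trivial zeros of $\zeta$. The bound $\Phi(X) \ll X^{1/2+\epsilon}$ forces every non-trivial zero to satisfy $\mathrm{Re}(\rho) \leq 1/2$, and the functional equation $\rho \leftrightarrow 1-\bar\rho$ upgrades this to $\mathrm{Re}(\rho) = 1/2$, i.e.\ RH.

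The main obstacle is the first step of the reverse direction: deriving the unconditional explicit formula with an error term only $O_k(X^{k-1}\log^A X)$. For $k=2$ this is due to Goldston--Yang and is the engine behind Theorem~\ref{1}; extending to general $k$ requires adapting their finite exponential sum technique to the $k$-fold additive convolution structure of $G_k(n)$ and tracking additional residues at $s=0,-1,\ldots,-(k-1)$, but the method should carry through. The Landau-type step is also delicate but standard; crucially, the denominator $\rho(\rho+1)\cdots(\rho+k-1)$ has degree $k \geq 2$, so the series over zeros converges absolutely and the contradiction extracting a dominant oscillation from the rightmost hypothetical off-line zero can be made rigorous.
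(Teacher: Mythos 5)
Your forward direction is correct and is exactly what the paper intends: under RH, Theorem \ref{1} reduces the claim to $H_k(X)\ll_k X^{k-1/2}$, and your convergence argument for $\sum_\rho|\rho(\rho+1)\cdots(\rho+k-1)|^{-1}$ via $N(T)\ll T\log T$ is fine since the denominator has degree $k\ge 2$. The reverse direction, however, rests on a step that does not go through: you assume the Goldston--Yang argument can be rerun \emph{without} RH to produce an unconditional explicit formula with error $\mathcal{O}_k(X^{k-1}\log^A X)$. No such formula is available, even for $k=2$. The error analysis behind Theorem \ref{1} hinges on Lemma \ref{EX} (Lemma 7 of \cite{GY}), a mean-square bound for $S_0(\alpha,x)$ that is itself conditional on RH; unconditionally the contribution of $I_0$ is controlled by quantities comparable to $\int_X^{2X}(\psi(x)-x)^2\,dx$, which is $\Omega\bigl(X^{2B+1-\epsilon}\bigr)$ with $B=\sup\Re(\rho)$, so the remainder is emphatically not ``independent of the location of the zeros.'' Had an unconditional formula with error $X^{k-1}\log^A X$ been known, RH would follow from your hypothesis almost immediately; that is a sign that this step is where the entire difficulty is concentrated.

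The paper avoids this circularity in two moves. First it proves Theorem \ref{3} (the quasi-RH statement) by a power-series and kernel-function argument applied to $F(z)^k$, concluding $B<1$ from the hypothesized asymptotic with $\delta=\tfrac12-\epsilon$. Only then does it invoke Granville's \emph{unconditional} formula \eqref{S-B}, whose error term $\mathcal{O}_k(X^{k-2+\frac{4B+2}{3}+o(1)})$ is useful precisely because $B<1$; this yields a meromorphic continuation of $f_k(s)=\sum_n G_k(n)n^{-s}$ to $\Re(s)>k-2+\frac{4B+2}{3}$ with poles at the points $\rho+k-1$, and the hypothesis $E_k(X)\ll_k X^{k-1/2+\epsilon}$ forces $f_k(s)-\frac{1}{(s-k)(k-1)!}$ to be analytic on $\Re(s)>k-\tfrac12$, whence $k-1+B\le k-\tfrac12$ and $B=\tfrac12$. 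Your Landau-type endgame is morally the same as this abscissa argument, but without the quasi-RH step and Granville's formula you have no unconditional meromorphic continuation on which to run it. To repair your proof you would either have to supply the missing unconditional explicit formula (a substantial open-ended task) or restructure the argument along the paper's lines.
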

In fact, we show that Theorem \ref{2} is a consequence of the following theorem. 
\begin{thm}[Quasi-Riemann Hypothesis]\label{3}
    Let $k\geq 2$. We assume that there exists $0<\delta<1$ such that $$S_k(X)=\dfrac{X^k}{k!}+\mathcal{O}_k(X^{k-\delta}),$$ then for any non-trivial zero $\rho$ of Riemann zeta function, we have $\Re (\rho) < 1$.
\end{thm}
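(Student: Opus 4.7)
The plan is to convert the assumed arithmetic bound on $S_k(X)$ into analytic information about a Dirichlet-type generating function attached to $G_k$, and then identify its poles with the non-trivial zeros of $\zeta$, so that the absence of poles in a region forces the absence of zeros. Define $F_k(s) := \sum_{n\ge 1} G_k(n)\, n^{-s}$, absolutely convergent for $\Re(s)>k$. Partial summation gives
$$F_k(s) = s\int_1^\infty S_k(X)\, X^{-s-1}\,dX \qquad (\Re(s) > k),$$
and writing $S_k(X) = X^k/k! + E(X)$ with $E(X) = O_k(X^{k-\delta})$ yields
$$F_k(s) = \frac{s}{k!\,(s-k)} + s\int_1^\infty E(X)\, X^{-s-1}\,dX,$$
in which the remaining integral is absolutely convergent for $\Re(s)>k-\delta$. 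Thus the hypothesis forces $F_k(s)$ to extend meromorphically to $\Re(s)>k-\delta$, with no singularity there other than the simple pole at $s=k$.

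To locate the singularities of $F_k(s)$ in terms of zeros of $\zeta$, I would pass through the auxiliary generating function
$$h(x) := \sum_{n\ge 1} \Lambda(n)\, e^{-nx},$$
which satisfies $\Gamma(s) F_k(s) = \int_0^\infty x^{s-1} h(x)^k\, dx$ for $\Re(s)>k$ by straightforward interchange of summation and integration. The Mellin-inversion representation
$$h(x) = \frac{1}{2\pi i}\int_{(c)} \Gamma(w)\left(-\frac{\zeta'(w)}{\zeta(w)}\right) x^{-w}\, dw,$$
together with a contour-shift past the pole of $-\zeta'/\zeta$ at $w=1$ and the non-trivial zeros $\rho$, produces the singular expansion
$$h(x) = \frac{1}{x} - \sum_{\rho} \Gamma(\rho)\, x^{-\rho} + O(1) \qquad (x\to 0^+).$$
Raising to the $k$-th power, the binomial cross-term between $x^{-(k-1)}$ and $-\Gamma(\rho)\, x^{-\rho}$ contributes $-k\,\Gamma(\rho)\, x^{-(k-1)-\rho}$, which generates, via the Mellin transform, a simple pole of $\Gamma(s)\,F_k(s)$ at $s=\rho+k-1$. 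Dividing by $\Gamma(s)$ and noting that $\Gamma(\rho+k-1)$ is finite (as $\Re(\rho)+k-1\ge k-1>0$), one obtains a simple pole of $F_k(s)$ at $s=\rho+k-1$ with non-zero residue. One also checks that no other term in the multinomial expansion of $h(x)^k$ contributes a pole at the same location, provided $\rho\ne 1$, so no cancellation is possible.

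Comparing the two descriptions settles the theorem: if $\zeta$ possessed a non-trivial zero $\rho_0$ with $\Re(\rho_0)\ge 1$, then $F_k(s)$ would acquire a pole at $s=\rho_0+k-1$ with $\Re(s)\ge k > k-\delta$, contradicting the meromorphic continuation established in Step~1. Hence $\Re(\rho)<1$ for every non-trivial zero. In fact the argument delivers the stronger conclusion $\Re(\rho)\le 1-\delta$, which is what is needed in order to derive the Riemann Hypothesis from Theorem~\ref{2} on taking $\delta = 1/2-\epsilon$ and letting $\epsilon\to 0$.

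The main technical obstacle is to make the singular expansion of $h(x)^k$ a rigorous asymptotic identity rather than a formal manipulation. One must justify the contour shift in the Mellin representation of $h(x)$ with uniform estimates for $\zeta'/\zeta$ on vertical lines, control the conditionally-convergent sum $\sum_\rho \Gamma(\rho)\, x^{-\rho}$ by exploiting Stirling's exponential decay of $|\Gamma(\rho)|$ against the polynomial growth $N(T)\ll T\log T$ of the zero-counting function, and justify the term-by-term Mellin integration and residue computation. Once these analytic points are secured, the proof concludes by a direct comparison of singularities.
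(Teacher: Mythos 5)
Your strategy --- continue $F_k(s)=\sum_n G_k(n)n^{-s}$ to $\Re(s)>k-\delta$ using the hypothesis, and then exhibit a pole at $s=\rho_0+k-1$ produced by a putative zero $\rho_0$ of large real part --- is genuinely different from the paper's. The paper never touches the Dirichlet series of the $k$-fold convolution: it works with the power series $F(z)=\sum_n\Lambda(n)z^n$ on the circle $|z|=1-1/N$, uses the hypothesis to get $F(z)^k=(1-z)^{-k}+\mathcal{O}_k(|1-z|N^{k+1-\delta})$, extracts a $k$-th root to obtain $F(z)=1/(1-z)+\mathcal{O}_k(|1-z|^kN^{k+1-\delta})$ on a major arc near $z=1$, and then integrates $F(z)-1/(1-z)$ against the kernel $z^{-N-1}(1-z^N)/(1-z)$, using Cauchy--Schwarz and the unconditional Parseval bound $\int|F(z)-1/(1-z)|^2\,|dz|\ll N\log N$ on the minor arc, to conclude $\psi(N)-N\ll_k N^{1-\delta/(2(k+1))}(\log N)^{1/2}$ and hence the zero-free region. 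The root extraction is the whole point: it deconvolves, so that the singularities being detected are those of $F(z)-1/(1-z)$ itself, which correspond one-to-one to zeros of $\zeta$ with no interference between them.

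Your version has a genuine gap precisely at the step ``no other term in the multinomial expansion of $h(x)^k$ contributes a pole at the same location, so no cancellation is possible.'' A term of $h(x)^k$ with $j$ factors $1/x$ and $m\ge 2$ zero-factors $\rho_{i_1},\dots,\rho_{i_m}$ produces a candidate singularity of $\Gamma(s)F_k(s)$ at $s=j+\rho_{i_1}+\dots+\rho_{i_m}$, and this lands on your target $s=k-1+\rho_0$ whenever $\sum_{l}\rho_{i_l}=k-1-j+\rho_0$; since $j\le k-m$ and each zero has real part at most $1$, this forces the $m$ zeros to have real parts averaging at least $1-(1-\Re\rho_0)/m$. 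Such coincidences are easy to exclude when $\Re\rho_0$ is well below $1$, but the regime you must handle in the contradiction argument is exactly $\Re\rho_0$ close to (or equal to) $1$, where infinitely many $m$-tuples of zeros could in principle sum to the same point and their absolutely convergent, $\Gamma$-weighted residue contributions could conceivably cancel the $m=1$ term. Nothing in the proposal rules this out, and this is the standard obstruction to reading off zeros of $\zeta$ from the singularities of a $k$-fold additive convolution --- it is exactly what the paper's root-extraction step is designed to circumvent. A secondary point: the content actually used later (Step 5 of the proof of Theorem \ref{2}) is the uniform bound $\sup_\rho\Re(\rho)<1$, not the pointwise statement $\Re(\rho)<1$ (which is classical); your argument, were the pole-detection step justified, would give the stronger $\sup_\rho\Re(\rho)\le 1-\delta$, but as written that key step is not secured.
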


\section{Proof of Theorem \ref{1}}
To prove Theorem \ref{1}, we use induction on $k$. This theorem is true when $k=2$ (Theorem \ref{0}) and suppose that it holds up to $k-1$, then we prove it for $k$. We use the notation of \cite{GY}.  Consider the generating function
\begin{align*}
    S_0(\alpha,x)=\sum_{n \leq x}\Lambda_0(n)e(n\alpha), \hspace{1cm} e(\alpha)=e^{2\pi i \alpha},
\end{align*}
where $\Lambda_0(n)=\Lambda(n)-1.$ Then for $k\geq 2$, we have
\begin{align*}
     S_0(\alpha,x)^k&=\sum_{n_1,\dots, n_k \leq x}\Lambda_0(n_1)\dots\Lambda_0(n_k)e((n_1+\dots+n_k)\alpha)\\
     &=\sum_{n\leq kx}B_k(n, x)e(n\alpha),
\end{align*}
where 
\begin{align*}
    B_k(n, x)=\sum_{\substack{n_1,\dots, n_k \leq x\\n_1+\dots+n_k=n}}\Lambda_0(n_1)\dots\Lambda_0(n_k).
\end{align*}
When $k\leq n\leq x$, we can express $G_k(n)$ through $B_k(n,x)$ as
\begin{align}\label{B_k}
    \begin{split}
        B_k(n, x)&=\sum_{n_1+\dots+n_k=n}\Lambda_0(n_1)\dots\Lambda_0(n_k)\\
        &=\sum_{n_1+\dots+n_k=n}(\Lambda(n_1)-1)\dots(\Lambda(n_k)-1)\\
        &=G_k(n)-k\sum_{n_1=1}^{n-k+1}G_{k-1}(n-n_1)+\sum_{i=2}^{k-2}(-1)^i \binom{k}{i}\sum_{n_1+\dots+n_k=n}\Lambda(n_i)\dots\Lambda(n_k)\\
        &+(-1)^{k-1}k\sum_{n_1+\dots+n_k=n}\Lambda(n_k)+(-1)^k\sum_{n_1+\dots+n_k=n}1.
    \end{split}
\end{align}
Let $$I(X,\alpha)=\sum_{n\leq X}e(n\alpha).$$ We then have the estimate  $I(X,\alpha)\ll \min \left(X, \dfrac{1}{||\alpha||}\right).$ For $x\geq X$, We have
\begin{align*}
    \int_0^1 S_0(\alpha,x)^kI(X,-\alpha)d\alpha= \sum_{n\leq X}B_k(n,x).
\end{align*}
Substituting \eqref{B_k} into the above, we obtain
\begin{align}\label{tong S_k}
\begin{split}
     S_k(X)&=\int_0^1 S_0(\alpha,x)^kI(X,-\alpha)d\alpha+\sum_{i=1}^{k-2}(-1)^{i+1} \binom{k}{i}\sum_{n\leq X}\sum_{n_1+\dots+n_k=n}\Lambda(n_{i+1})\dots\Lambda(n_k)\\
     &+(-1)^{k}k\sum_{n\leq X}\sum_{n_1+\dots+n_k=n}\Lambda(n_k)+(-1)^{k+1}\sum_{n\leq X}\sum_{n_1+\dots+n_k=n}1\\
        &=:I_0+ \sum_{i=1}^{k-2}(-1)^{i+1} \binom{k}{i}I_i+(-1)^{k}kI_{k-1}+(-1)^{k+1}I_k.
\end{split}
\end{align}
We estimate $I_i$, for $0\leq i\leq k$.
\subsection{Main term of $I_k$.}
We have
\begin{align}\label{I_k}
    I_k=\sum_{n\leq X}\sum_{n_1+\dots+n_k=n}1=\sum_{k\leq n\leq X}\binom{n-1}{k-1}= \dfrac{X^k}{k!}+\mathcal{O}(X^{k-1}).
\end{align}

\subsection{Main term of $I_{k-1}$.}
\begin{align*}
    I_{k-1}&=\sum_{n\leq X}\sum_{n_1+\dots+n_k=n}\Lambda(n_k)=\sum_{n\leq X} \sum_{n_k=1}^{X-(k-1)}\sum_{n_1+\dots+n_{k-1}=n-n_k}\Lambda(n_k)\\
    &=\sum_{n\leq X} \sum_{n_k=1}^{X-(k-1)}\binom{n-n_k-1}{k-2}\Lambda(n_k)=\sum_{n_k=1}^{X-(k-1)}\Lambda(n_k) \left(\dfrac{(X-n_k)^{k-1}}{(k-1)!}+\mathcal{O}(X^{k-2})\right)\\
    &=\psi_{k-1}(X-k+1)+\mathcal{O}(X^{k-1}),
\end{align*}
where for a non-negative integer $j$, $$\psi_j(x):=\dfrac{1}{j!}\sum_{n\leq x}\Lambda(n)(x-n)^j.$$When $j=0$, we have $\psi_0(x)=\psi(x)$.
We have some properties of this function.

 Firstly, 
 \begin{align}\label{j+1}
       \psi_j(x+1)=\psi_j(x)+\mathcal{O}(x^j).
 \end{align}
We note that, 
\begin{align*}
    \psi_{j}(x) = \int_0^x \psi_{j-1}(t)dt.
\end{align*}
Moreover, for $j=1$ we have an explicit formula (13.7 \cite{MV})
\begin{align*}
    \psi_1(x)&=\dfrac{x^2}{2}-\sum_\rho \dfrac{x^{\rho+1}}{\rho(\rho+1)}-\dfrac{\zeta'}{\zeta}(0)x+\dfrac{\zeta'}{\zeta}(-1)+\mathcal{O}(x^{-1/2}) \\
    &=\dfrac{x^2}{2}-\sum_\rho \dfrac{x^{\rho+1}}{\rho(\rho+1)} +\mathcal{O}(x).
\end{align*}
Then by induction, we obtain 
\begin{align}\label{j}
     \psi_j(x)= \dfrac{x^{j+1}}{(j+1)!}-\sum_\rho \dfrac{x^{\rho+j}}{\rho(\rho+1)...(\rho+j)}+\mathcal{O}(x^{j}).
\end{align}

From \eqref{j+1}, \eqref{j} we obtain
\begin{align}\label{I_k-1}
    I_{k-1}=\dfrac{X^k}{k!}-\sum_\rho \dfrac{X^{\rho+k-1}}{\rho(\rho+1)...(\rho+k-1)}+\mathcal{O}(X^{k-1}).
\end{align}
\subsection{Estimate $I_0$.}
Similar to the idea of Goldston and Yang \cite{GY}, we define an expected value function by
 $$E_X(S_0(\alpha)):=\dfrac{1}{X}\int_X^{2X}S_0(\alpha,x)dx. $$
We need the following lemmas.
\begin{lem}[Lemma 7, \cite{GY}]\label{EX} Assuming the RH, we have for $1 \leq h \leq X$
$$\int_{-1/2h}^{1/2h}E_X(\left|S_0(\alpha)\right|^2)d\alpha \ll \dfrac{X\log ^2X}{h}.$$
\end{lem}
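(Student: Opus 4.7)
The plan is to reduce the statement to a Selberg-type mean-square estimate for short-interval sums of the von Mangoldt function, which is available under RH. First, by Fubini, I rewrite
$$\int_{-1/(2h)}^{1/(2h)} E_X(|S_0(\alpha)|^2)\,d\alpha = \frac{1}{X}\int_X^{2X}\left(\int_{-1/(2h)}^{1/(2h)}|S_0(\alpha,x)|^2\,d\alpha\right) dx,$$
reducing matters to proving $\int_{-1/(2h)}^{1/(2h)}|S_0(\alpha,x)|^2\,d\alpha \ll X\log^2 X/h$ uniformly for $x\in[X,2X]$.

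Second, I apply Gallagher's lemma to the finite exponential sum $S_0(\alpha,x)=\sum_{n\leq x}\Lambda_0(n) e(n\alpha)$, obtaining
$$\int_{-1/(2h)}^{1/(2h)}|S_0(\alpha,x)|^2\,d\alpha \ll \frac{1}{h^2}\int_{-\infty}^{\infty}\Bigl|\sum_{\substack{y<n\leq y+h\\ n\leq x}}\Lambda_0(n)\Bigr|^2 dy.$$
For $0\leq y\leq x-h$ the inner sum equals $\psi(y+h)-\psi(y) - h + O(1)$, while the boundary ranges, where the window protrudes past $x$ or extends below $0$, are shorter than $h$ and contribute $O(h X\log^2 X)$ at worst by the pointwise RH bound.

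Third, I invoke the Selberg--Saffari--Vaughan mean-square estimate: under RH, for $1\leq h\leq X$,
$$\int_0^{X}\bigl(\psi(y+h)-\psi(y)-h\bigr)^2\,dy \ll h X\log^2(X/h)\ll h X\log^2 X.$$
Inserting this into the Gallagher bound yields $\frac{1}{h^2}\cdot hX\log^2 X = X\log^2 X/h$ uniformly in $x\in[X,2X]$, and averaging over $x$ preserves the estimate.

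\textbf{Main obstacle.} The crux is getting the right power of $\log X$. If one used only the pointwise RH estimate $\psi(y)-y\ll\sqrt{y}\log^2 y$ inside Gallagher's lemma, each $y$-slice of the inner sum would be $O(\sqrt{X}\log^2 X)$ and the bound would degrade to $X^2\log^4 X/h^2$, too weak by a factor of $X\log^2 X/h$. The saving comes from the mean-square cancellation captured by the Selberg-type theorem, which simultaneously saves a full factor of $h$ and two powers of $\log X$; this is precisely the step where the RH input is used in its strongest (second-moment) form rather than pointwise.
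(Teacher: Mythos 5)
The paper does not actually prove this lemma: it is imported verbatim as Lemma~7 of \cite{GY}, so there is no internal proof to compare against. Your strategy (Gallagher's lemma followed by the Selberg mean-square estimate for $\psi(y+h)-\psi(y)-h$ under RH) is the standard route and is essentially how the result is obtained in \cite{GY}. There is, however, a genuine gap in your treatment of the truncation at $n\le x$, and it sits exactly where the averaging over $x\in[X,2X]$ --- which you discard in your very first step by reducing to a bound uniform in $x$ --- is actually needed.

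Concretely: after Gallagher, the range $y\in(x-h,x]$ contributes $\frac{1}{h^2}\int_{x-h}^{x}\bigl|\sum_{y<n\le x}\Lambda_0(n)\bigr|^2\,dy$, and the inner sum is $(\psi(x)-x)-(\psi(y)-y)+O(1)$. The pointwise RH bound is $\psi(t)-t\ll\sqrt{t}\log^2 t$, so squaring gives $X\log^4X$ per unit length of $y$, and the boundary contribution is $\ll\frac{1}{h^2}\cdot hX\log^4X=X\log^4X/h$ --- larger than the target $X\log^2X/h$ by a factor $\log^2X$, not the ``$O(hX\log^2 X)$'' you assert (that figure would require $\psi(x)-x\ll\sqrt{x}\log x$, which is not known even under RH). Hence the uniform-in-$x$ statement you reduce to is not accessible by this argument. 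The repair is to keep the $x$-average: by Fubini the boundary piece becomes $\frac{1}{h^2}\cdot\frac1X\int_X^{2X}\int_{x-h}^x(\cdots)^2\,dy\,dx$, and the terms $(\psi(x)-x)^2$ and $(\psi(y)-y)^2$ may then be replaced by their mean squares over $[X,2X]$, which under RH are $\ll X$ by Cram\'er's estimate $\int_1^T(\psi(t)-t)^2\,dt\ll T^2$; this gives a boundary contribution $\ll X/h$, which is acceptable. (The short range $y\in(-h,0]$ needs a similar, easier treatment.) Your main range and the Selberg input $\int_0^X(\psi(y+h)-\psi(y)-h)^2\,dy\ll hX\log^2(2X/h)$ are handled correctly, and your closing remark about needing second-moment rather than pointwise information is exactly right --- but it applies to the endpoint term as well, which is precisely why the expectation $E_X$ is built into the statement and cannot be stripped off at the outset.
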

\begin{lem}\label{S_0}
    Let $x\in [X, 2X]$, we have the estimate
    $$S_0(\alpha, x) \ll X.$$
\end{lem}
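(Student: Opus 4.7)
The statement is labelled a \emph{trivial bound} in the text, so the plan is deliberately short: the point is that we discard the oscillation $e(n\alpha)$ entirely and bound in absolute value, then invoke Chebyshev. The key feature needed downstream is that the estimate is uniform in $\alpha$, which falls out for free since we never use the phase.

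First I would apply the triangle inequality to get
\[
|S_0(\alpha,x)| \;\leq\; \sum_{n\leq x}|\Lambda_0(n)|\,|e(n\alpha)| \;=\; \sum_{n\leq x}|\Lambda(n)-1|.
\]
Since $\Lambda(n)\geq 0$, the right-hand side is at most $\sum_{n\leq x}\Lambda(n)+\sum_{n\leq x}1=\psi(x)+\lfloor x\rfloor$.

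Next I would invoke Chebyshev's elementary estimate $\psi(x)\ll x$ (no appeal to RH is needed here, which is important since Lemma \ref{S_0} is phrased unconditionally). Combining gives $|S_0(\alpha,x)|\ll x$, and the hypothesis $x\in[X,2X]$ converts this to $|S_0(\alpha,x)|\ll X$, uniformly in $\alpha\in\mathbb{R}$.

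There is no real obstacle: the content of the lemma is simply to record the crude supremum bound so that it can be paired cleanly with the mean-square estimate of Lemma \ref{EX} when $I_0$ is treated. The only point worth flagging is that one should \emph{not} try to improve the bound here (e.g. by partial summation against $\psi(x)-x$), because the whole strategy of the proof of Theorem \ref{1} relies on using the trivial pointwise bound outside a short interval around $\alpha=0$ and the sharp $L^2$-bound of Lemma \ref{EX} inside it; any attempt to be cleverer in Lemma \ref{S_0} would only duplicate effort.
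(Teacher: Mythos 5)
Your proof is correct and follows exactly the same route as the paper: triangle inequality, the bound $|\Lambda(n)-1|\leq\Lambda(n)+1$ giving $\psi(x)+\lfloor x\rfloor$, and Chebyshev's elementary estimate together with $x\leq 2X$. Nothing further is needed.
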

\begin{proof}
    We have 
    \begin{align*}
        S_0(\alpha, x)&\ll \sum_{n\leq x} \left|\Lambda(n)-1\right||e(n\alpha)| \leq \sum_{n\leq x} \left|\Lambda(n)-1\right|\\
        &\leq  \sum_{n\leq x}\Lambda(n)+\sum_{n\leq x}1=\psi(x)+[x]\\
        &\ll X,
    \end{align*}
    with $x \leq 2X$.
\end{proof}
Consider $\alpha \in [-1/2, 1/2]$, since $I(X,-\alpha) \ll \min\left(X, \dfrac{1}{||\alpha||}\right)$, we have
\begin{align}\label{I_0'}
\begin{split}
    I_0&\ll \int_{-1/2}^{1/2}\left|S_0(\alpha,x)\right|^kI(X,-\alpha)d\alpha= \int_{-1/2}^{1/2}E_X(\left|S_0(\alpha)\right|^k)I(X,-\alpha)d\alpha\\
& \ll X\int_{-1/X}^{1/X}E_X(\left|S_0(\alpha)\right|^k)d\alpha+ \int_{1/X}^{1/2}\dfrac{E_X(\left|S_0(\alpha)\right|^k)}{\alpha}d\alpha.
\end{split}
\end{align}
We estimate the first term of \eqref{I_0'}
\begin{align*}
&X\int_{-1/X}^{1/X}E_X(\left|S_0(\alpha)\right|^k)d\alpha = X\int_{-1/X}^{1/X} \dfrac{1}{X}\int_X^{2X}\left|S_0(\alpha,x)\right|^kdx d\alpha\\
&\leq X\int_{-1/X}^{1/X} \dfrac{1}{X} \max_{x\in [X,2X]}\left|S_0(\alpha,x)\right|^{k-2}\int_X^{2X}\left|S_0(\alpha,x)\right|^2dx d\alpha\\
&\leq X \max_{\substack{x\in [X,2X]\\ |\alpha| \leq 1/X}}\left|S_0(\alpha,x)\right|^{k-2}\int_{-1/X}^{1/X}E_X(\left|S_0(\alpha)\right|^2)d\alpha \\
&\ll X^{k-1}\log^2X, 
\end{align*}
where for the last inequality, we use  Lemma \ref{EX} and Lemma \ref{S_0}.

For the second term of \eqref{I_0'}, writing  $[1/X, 1/2]$ as the disjoint union of $[2^{j}/X, 2^{j+1}/X]$ for $0\leq j \leq \mathcal{O}(\log X)$, then 
\begin{align*}
\int_{1/X}^{1/2}\dfrac{E_X(\left|S_0(\alpha)\right|^k)}{\alpha}d\alpha &\ll \sum_{j=0}^{\mathcal{O}(\log X)}\dfrac{X}{2^j} \int_{2^j/X}^{2^{j+1}/X}E_X(\left|S_0(\alpha)\right|^k)d\alpha\\
&\ll   \sum_{j=0}^{\mathcal{O}(\log X)}\dfrac{X}{2^j} \int_{2^j/X}^{2^{j+1}/X} \dfrac{1}{X} \max_{x\in [X,2X]}\left|S_0(\alpha,x)\right|^{k-2}\int_X^{2X}\left|S_0(\alpha,x)\right|^2dx d\alpha\\
&\leq  \sum_{j=0}^{\mathcal{O}(\log X)} \dfrac{X}{2^j}  \max_{\substack{x\in [X,2X]\\ \alpha \in [2^j/X, 2^{j+1}/N]}}\left|S_0(\alpha,x)\right|^{k-2}\int_{2^j/X}^{2^{j+1}/X} E_X(\left|S_0(\alpha)\right|^2)d\alpha\\
 &\ll \sum_{j=0}^{\mathcal{O}(\log X)}\dfrac{X}{2^j}X^{k-2} X\dfrac{2^{j+1}}{X}\log^2X \ll X^{k-1}\log^3X.
\end{align*}
Then we obtain
\begin{align}\label{I_0}
    I_0 \ll X^{k-1}\log^3X.
\end{align}
\subsection{Main term of $I_i$, $1\leq i\leq k-2$.}
\begin{align*}
    I_i&=\sum_{n\leq X} \sum_{n_1+\dots+n_k=n}\Lambda(n_{i+1})\dots\Lambda(n_k)\\
    &=\sum_{n\leq X}\sum_{n_1+\dots+n_i=i}^{n-(k-i)}\left(\sum_{n_{i+1}+\dots+n_k=n-(n_1+\dots+n_i)}\Lambda(n_{i+1})\dots\Lambda(n_k)\right)\\
    &=\sum_{n\leq X}\left[\binom{i-1}{i-1}G_{k-i}(n-i)+ \binom{i}{i-1}G_{k-i}(n-1-i)+\dots+\binom{n-(k-i+1)}{i-1}G_{k-i}(k-i)\right]\\
    &=\binom{i-1}{i-1}G_{k-i}(X-i)+\left[\binom{i-1}{i-1}+\binom{i}{i-1}\right]G_{k-i}(X-1-i)+\dots\\
    &+\left[\binom{i-1}{i-1}+\binom{i}{i-1}+\dots+\binom{i+X-k-1}{i-1}\right]G_{k-i}(k-i).
\end{align*}
Using the formula for $m$ non-negative integer
\begin{align}
    \binom{i-1}{i-1}+\binom{i}{i-1}+\dots+\binom{i+m}{i-1}=\binom{i+m+1}{i},
\end{align}
we rewrite 
\begin{align*}
    \begin{split}
        I_i&=\binom{i}{i}G_{k-i}(X-i)+\binom{i+1}{i}G_{k-i}(X-i)+\dots+ \binom{i+X-k}{i}G_{k-i}(k-i)\\
        &=\sum_{n\leq X}\binom{X-n}{i}G_{k-i}(n)\\
        &=\sum_{n\leq X}\dfrac{(X-n)^i}{i!}G_{k-i}(n)+\mathcal{O}\left(\sum_{n\leq X}(X-n)^{i-1}G_{k-i}(n)\right)\\
        &=T_i(X,k-i)+\mathcal{O}(X^{k-1}),
    \end{split}
\end{align*}
where for $j \geq 0$, 
$$T_j(X,k-i):=\dfrac{1}{j!}\sum_{n\leq X}(X-n)^jG_{k-i}(n).$$
Then we have a property for this function, that is 
\begin{align*}
    T_{j+1}(X,k-i)=\int_0^X T_j(t,k-i) dt
\end{align*}
Moreover, by the induction hypothesis, for $1\leq i\leq k-2$
\begin{align*}
    \sum_{n\leq x}G_{k-i}(n)=\dfrac{x^{k-i}}{(k-i)!}+H_{k-i}(x)+\mathcal{O}_k(x^{k-i-1}\log^3x).
\end{align*}
So we calculate
\begin{align}\label{Tj}
    T_j(X,k-i)= \dfrac{X^{k-i+j}}{(k-i+j)!}-(k-i)\sum_\rho\dfrac{X^{\rho+k-i+j-1}}{\rho(\rho+1)...(\rho+k-i+j-1)}+\mathcal{O}_k(X^{k-i+j-1}\log^3 X).
\end{align}
Replacing $j=i$ in \eqref{Tj},   we obtain
\begin{align}\label{I_i}
    I_i=\dfrac{X^k}{k!}-(k-i)\sum_\rho\dfrac{X^{\rho+k-1}}{\rho(\rho+1)...(\rho+k-1)}+\mathcal{O}_k(X^{k-1}\log^3 X).
\end{align}
Combining \eqref{tong S_k}, \eqref{I_k}, \eqref{I_k-1}, \eqref{I_0} and \eqref{I_i}, we obtain
\begin{align*}
    S_k(X)&=\dfrac{X^k}{k!}\left[\sum_{i=1}^k(-1)^{i+1}\binom{k}{i}\right]-\sum_\rho\dfrac{X^{\rho+k-1}}{\rho(\rho+1)...(\rho+k-1)}\left[\sum_{i=1}^k(-1)^{i+1}\binom{k}{i}(k-i)\right]\\
    &+\mathcal{O}(X^{k-1}\log^{3} X)\\
    &=\dfrac{X^k}{k!}-k\sum_\rho\dfrac{X^{\rho+k-1}}{\rho(\rho+1)...(\rho+k-1)}+\mathcal{O}_k(X^{k-1}\log^3 X),
\end{align*}
where for the last equation, we use
\begin{align*}
    \sum_{i=0}^k(-1)^{i}\binom{k}{i}&=(1-1)^k=0\\
    \sum_{i=0}^k(-1)^{i}\binom{k}{i}(k-i)&=k(1-1)^{k-1}=0.
\end{align*}
Thus the proof is complete.
\section{Proof of the Quasi-Riemann Hypothesis}
In this part, we consider the power series for $|z|<1$,
\begin{align}
    F(z)= \sum_{n\geq 1}\Lambda(n)z^n
\end{align}
We take the $k^{th}$ power of $F(z)$ and obtain
\begin{align*}
    F(z)^k= \sum_{n\geq 1}G_k(n)z^n=(1-z) \sum_{n\geq 1}S_k(n)z^n
\end{align*}
By the assumption in Theorem \ref{3}, we have
\begin{align}\label{s}
    \begin{split}
         \sum_{n\geq 1}S_k(n)z^n&=\sum_{n\geq 1}\left(\dfrac{n^k}{k!}+\mathcal{O}_k(n^{k-\delta})\right)z^n\\
         &=\dfrac{1}{k!}\sum_{n\geq 1}n^kz^n+ \mathcal{O}_k\left(\sum_{n\geq 1}n^{k-\delta}|z|^n\right).
    \end{split}
\end{align}
We then evaluate the main term in \eqref{s} as follows.
\begin{lem}\label{lem1}
    For any integer $k\geq 2,$ we have
    \begin{align*}
        \sum_{n \geq 1}n^kz^n =\dfrac{k!}{(1-z)^{k+1}}+ \mathcal{O}(|1-z|^{-k}).
    \end{align*}
\end{lem}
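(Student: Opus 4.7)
The plan is to reduce the lemma to the elementary negative-binomial identity $\sum_{n\geq 0}\binom{n+k}{k}z^{n} = (1-z)^{-(k+1)}$ and to control the remaining pieces by induction on $k$. Multiplying that identity by $k!$, rewriting $\binom{n+k}{k} = (n+k)(n+k-1)\cdots(n+1)/k!$, shifting the index to $m=n+k$ and multiplying through by $z^{k}$, I would obtain
\begin{equation*}
\frac{k!\,z^{k}}{(1-z)^{k+1}} \;=\; \sum_{m\geq 1} m(m-1)\cdots(m-k+1)\,z^{m},
\end{equation*}
the lower terms $1\leq m<k$ vanishing automatically.

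Next I would write $n^{k}= n(n-1)\cdots(n-k+1) + Q_{k-1}(n)$, where $Q_{k-1}$ is a polynomial in $n$ of degree at most $k-1$ with coefficients depending only on $k$ (concretely, Stirling numbers of the second kind, but only the degree matters). Substituting gives
\begin{equation*}
\sum_{n\geq 1}n^{k}z^{n} \;=\; \frac{k!\,z^{k}}{(1-z)^{k+1}} \;+\; \sum_{n\geq 1}Q_{k-1}(n)\,z^{n}.
\end{equation*}
By induction on $k$, starting from the base case $\sum_{n\geq 1}z^{n} = z/(1-z) = \mathcal{O}(|1-z|^{-1})$, the sum $\sum_{n\geq 1}n^{j}z^{n}$ is $\mathcal{O}_{j}(|1-z|^{-(j+1)})$ for each $0\leq j\leq k-1$, and since $|1-z|\leq 2$ on the open unit disk, $|1-z|^{-(j+1)} = \mathcal{O}_{k}(|1-z|^{-k})$; hence $\sum Q_{k-1}(n)z^{n} = \mathcal{O}_{k}(|1-z|^{-k})$. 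Finally, using $1-z^{k}=(1-z)(1+z+\cdots+z^{k-1})$ I would replace $z^{k}$ by $1$ in the main term via
\begin{equation*}
\frac{k!\,z^{k}}{(1-z)^{k+1}} \;=\; \frac{k!}{(1-z)^{k+1}} \;-\; \frac{k!\,(1+z+\cdots+z^{k-1})}{(1-z)^{k}},
\end{equation*}
and since $|1+z+\cdots+z^{k-1}|\leq k$ for $|z|\leq 1$, this correction is again $\mathcal{O}_{k}(|1-z|^{-k})$. Summing the two error contributions yields the claim.

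The calculation is essentially mechanical, so I do not anticipate any serious obstacle; the only point requiring care is to ensure that every implicit constant depends on $k$ alone and not on $z$. This follows because each bounded factor encountered --- the geometric sum $1+z+\cdots+z^{k-1}$, the Stirling-type coefficients of $Q_{k-1}$, and the constants emerging in the induction --- is uniformly bounded on the closed unit disk by a quantity depending only on $k$, which absorbs uniformly into the error $\mathcal{O}_{k}(|1-z|^{-k})$.
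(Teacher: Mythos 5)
Your proof is correct, and it takes a noticeably different route from the paper's, even though both ultimately rest on the negative-binomial series $\sum_{n\geq 0}\binom{n+j}{j}z^{n}=(1-z)^{-(j+1)}$. The paper expands $n^{k}$ in the basis $\bigl\{\binom{n+j}{j}\bigr\}_{0\leq j\leq k}$, so that each basis element directly contributes a pole $a_j(1-z)^{-(j+1)}$; the price is that the leading coefficient $a_{k}$ is not obvious, and the bulk of the paper's proof is a finite-difference/inclusion--exclusion computation (the identities for $f_{k,i}$) showing $a_{k}=k!$. You instead expand $n^{k}$ in the falling-factorial basis, where the leading coefficient is trivially $1$ and the factor $k!$ emerges automatically from $k!\binom{n+k}{k}=(n+k)\cdots(n+1)$; the price you pay is the extra factor $z^{k}$ in the main term, which you correctly remove via $1-z^{k}=(1-z)(1+\cdots+z^{k-1})$ at a cost of $\mathcal{O}_{k}(|1-z|^{-k})$, plus a short auxiliary induction giving $\sum_{n\geq1}n^{j}z^{n}=\mathcal{O}_{j}(|1-z|^{-(j+1)})$ for $j<k$ to absorb the lower-degree polynomial $Q_{k-1}$. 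Your version buys a much shorter identification of the main coefficient at the cost of two small bookkeeping steps, and your attention to the uniformity of the implied constants in $k$ (using $|1-z|\leq 2$ on the unit disk) is exactly the point that needs care; both arguments are complete and correct.
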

\begin{proof}
We note that there exists unique real numbers $a_0, a_1,\dots, a_{k}$ such that 
   \begin{align}\label{n^kz^n}
       \sum_{j=0}^k \binom{n+j}{j}a_j=n^k
   \end{align}
   holds for integers $n=0, 1, \dots, k$. Then \eqref{n^kz^n} holds for all positive integers $n$.
From \eqref{n^kz^n}, we can rewrite
\begin{align*}
    \sum_{n \geq 1}n^kz^n = \dfrac{a_k}{(1-z)^{k+1}}+ \dfrac{a_{k-1}}{(1-z)^{k}}+ \dots +\dfrac{a_0}{1-z}.
\end{align*}
We calculate the value of $a_k$ 
\begin{align}
    a_k=k^k -\binom{k}{1}(k-1)^k+\binom{k}{2}(k-2)^k-\dots+(-1)^{k-1}\binom{k}{k-1}.
\end{align}
 We just need to prove
   \begin{align*}
       a_k=k!.
   \end{align*}
Let $k \geq 2$, we define the function
\begin{align}
    f_{k,i}=k^i -\binom{k}{1}(k-1)^i+\binom{k}{2}(k-2)^i-\dots+(-1)^{k-1}\binom{k}{k-1}.
\end{align}
We note that 
\begin{align}\label{ki}
     f_{k,i}=k(f_{k,i-1}+f_{k-1,i-1})
\end{align}
and 
\begin{align}\label{f0}
     f_{k,i}=0, \text{ for all }1\leq i \leq k-1.
\end{align}
In fact, we prove \eqref{f0} by induction on $i$.
When $i=1$, 
\begin{align*}
    f_{k,1}&=k -\binom{k}{1}(k-1)+\binom{k}{2}(k-2)-\dots+(-1)^{k-1}\binom{k}{k-1}\\
    &=\binom{k}{1}-2\binom{k}{2}+3\binom{k}{3}-\dots+(-1)^{k-1}k\binom{k}{k},
\end{align*}
since $$(k-n)\binom{k}{n}=(n+1)\binom{k}{n+1}, \text{ for all } 0\leq n \leq k.$$
On the other hand, 
\begin{align*}
    (1-x)^k=\sum_{j=0}^k(-1)^j\binom{k}{j}x^j,
\end{align*}
and its derivative
\begin{align}\label{fk}
    k(1-x)^{k-1}=\sum_{j=0}^k(-1)^jj\binom{k}{j}x^{j-1}
\end{align}
Replace $x$ in \eqref{fk} by $1$, we obtain $f_{k,1}=0$.

Next, assuming that \eqref{f0} is true for $1\leq i \leq k-2$, we now prove that it is true for $i+1$. By \eqref{ki}, we have
\begin{align*}
    f_{k,i+1}=k(f_{k,i}+f_{k-1,i})=0.
\end{align*}
From \eqref{ki} and \eqref{f0}, we obtain 
\begin{align*}
    f_{k,k}=kf_{k-1,k-1}=\dots=k!f_{1,1}=k!.
\end{align*}
Thus $$a_k=f_{k,k}=k!.$$ 
\end{proof}
Using Lemma \ref{lem1}, we rewrite \eqref{s} as follows
\begin{align*}
    \sum_{n\geq 1}S_k(n)z^n=\dfrac{1}{(1-z)^{k+1}}+\mathcal{O}_k\left((1-|z|)^{\delta-k-1}\right).
\end{align*}
Hence, we obtain 
\begin{align}\label{F}
\begin{split}
    F(z)^k&= (1-z)\sum_{n\geq 1}S_k(n)z^n\\
    &=\dfrac{1}{(1-z)^{k}}+\mathcal{O}_k\left(|1-z|(1-|z|)^{\delta-k-1}\right)\\
    &=\dfrac{1}{(1-z)^{k}}+\mathcal{O}_k\left(|1-z|N^{k+1-\delta}\right)
\end{split}
\end{align}
on the circle $|z|=R=1-\frac{1}{N}$, for a large positive integer $N$.
We remark that the error term is less than the absolute value of the main tern if $|1-z|<N^{\frac{\delta}{k+1}-1}.$ We call this is a major arc on $|z|=R$, denoted by  $\mathfrak{M}$. The rest of the circle is called a minor arc, denoted by $\mathcal{M}$. 

We consider the major arc. Taking the complex $k^{th}$ root of \eqref{F}, then $F(z)$ can be written as 
\begin{align*}
    F(z)= \dfrac{\omega}{1-z}+ \mathcal{O}_k\left(|1-z|^kN^{k+1-\delta}\right),
\end{align*}
where $\omega$ is a $k^{th}$ root of unity. Then we prove $\omega=1$. In fact, on the circle $|z|=R$, we choose $z$ a real number which tends to $1$, for example $z=e^{-1/N}$. Putting $c:=\frac{1}{1-z}$ a real number, then $c$ tends to infinity and $|1-z|^kN^{k+1-\delta}$ tends to $0$. Assuming $$\omega=a+bi, \text{ for }a,b \in \mathbb{R}, b\neq 0,$$
then, 
$$F(z)=ac+bci.$$
This is impossible because from the definition of $F(z)$, $F(z)$ is real if $z$ is a real number. Hence $\omega$ is also a real number. We consider two cases for the integer $k$.\\
Case 1: if $k$ is odd, this implies $\omega=1$.\\
Case 2: if $k$ is even, $\omega=\pm 1.$ Since $F(z)$ is continuous and its coefficients are non-negative, the sign of $\omega$ is $+$, i.e, $\omega=1$. 

We conclude 
\begin{align}\label{F(z)}
     F(z)= \dfrac{1}{1-z}+ \mathcal{O}_k\left(|1-z|^kN^{k+1-\delta}\right) \text{ on } \mathfrak{M}.
\end{align}

Now we introduce the kernel function,
\begin{align*}
    K(z)=z^{-N-1}\dfrac{1-z^N}{1-z},
\end{align*} then $K(z) \ll |1-z|^{-1}$. By Cauchy's integral formula, the Chebyshev function can be written as
\begin{align}
\begin{split}\label{psi}
     \psi(N)&=\dfrac{1}{2\pi i} \int_{|z|=R}F(z)K(z)dz\\
    &=N+ \dfrac{1}{2\pi i} \int_{|z|=R}\left(F(z)-\dfrac{1}{1-z}\right)K(z)dz.
\end{split}
\end{align}
We split the circle $|z|=R$ into the major arc $\mathfrak{M}$ and the minor arc $\mathcal{M}$. 

On $\mathfrak{M}$, from \eqref{F(z)}, we obtain 
\begin{align*}
    \int_{\mathfrak{M}}\left(F(z)-\dfrac{1}{1-z}\right)K(z)dz&\ll_k \int_{\mathfrak{M}} \left(|1-z|^kN^{k+1-\delta}\right) K(z)dz\\
    &\ll_k \int_{\mathfrak{M}} \left(|1-z|^{k-1}N^{k+1-\delta}\right)dz\\
    &\ll_k \left(N^{\frac{\delta}{k+1}-1}\right)^{k-1}N^{k+1-\delta}N^{\frac{\delta}{k+1}-1}\\
    &= N^{1-\frac{\delta}{k+1}}.
\end{align*}

On $\mathcal{M}$, by Cauchy-Schwarz inequality, we have
\begin{align*}
     \int_{\mathcal{M}}\left(F(z)-\dfrac{1}{1-z}\right)K(z)dz&\ll  \left(\int_{\mathcal{M}} |K(z)|^2 dz \right)^{1/2} \left( \int_{\mathcal{M}} \left|F(z)-\dfrac{1}{1-z}\right|^2dz\right)^{1/2}
\end{align*}
Moreover, similar to \cite{Bhowmik-Ruzsa} for $k=2$, we have the estimations 
\begin{align*}
     &\int_{\mathcal{M}}\left|F(z)-\dfrac{1}{1-z}\right|^2dz \leq 
     \int_{|z|=R}\left|F(z)-\dfrac{1}{1-z}\right|^2dz \ll
     N\log N\\
     &\int_{\mathcal{M}} |K(z)|^2 dz \ll \int_{\mathcal{M}} \dfrac{1}{|1-z|^2} dz \ll N^{1-\frac{\delta}{k+1}}.
\end{align*}
Hence, we have an estimate on the minor arc
\begin{align}
    \int_{\mathcal{M}}\left(F(z)-\dfrac{1}{1-z}\right)K(z)dz\ll N^{1-\frac{\delta}{2(k+1)}} (\log N)^{1/2}.
\end{align}
Combining the major arc, minor arc and from \eqref{psi}, we obtain 
\begin{align*}
    \psi(N)-N \ll_k N^{1-\frac{\delta}{2(k+1)}} (\log N)^{1/2}. 
\end{align*}
Therefore, for any non-trivial zero $\rho$ of Riemann zeta function, we have 
$$\Re(\rho)<1-\dfrac{\delta}{2(k+1)}<1.$$

\section{Proof of Theorem \ref{2}}
Assuming the Riemann Hypothesis, we can easily deduce the asymptotic formula of $S_k(X)$ in Theorem \ref{2}. Now we prove the reverse in the following steps.

\textit{Step 1: } Granville  showed a formula of $S_k(X)$ without using the RH ((1.3), \cite{granville1,granville2}), that is 
\begin{align}\label{S-B}
    S_k(X)=\dfrac{X^k}{k!}+\sum_\rho r_k(\rho)\dfrac{X^{\rho+k-1}}{\rho+k-1}+\mathcal{O}_k(X^{k-2+\frac{4B+2}{3}+o(1)}),
\end{align}
where $B=\sup \{\Re(\rho): \zeta(\rho)=0\}$  and  
$$r_k(\rho):=-\dfrac{k}{\rho\dots(\rho+k-2)}.$$
\begin{rem}
    We know that $1/2 \leq B \leq 1$.
\end{rem}
\textit{Step 2:} We define the corresponding Dirichlet series of $G_k(n)$
\begin{align*}
    f_k(s)=\sum_{n\geq 1}\dfrac{G_k(n)}{n^s}.
\end{align*}
We note that this series converges absolutely  in  $\{\Re(s)>k\}$.
From \eqref{S-B} in step 1, we have
\begin{align*}
    f_k(s)&=s\int_1^\infty S_k(x)x^{-s-1}dx\\
    &=s\int_1^\infty \left(\dfrac{x^k}{k!}+\sum_\rho r_k(\rho)\dfrac{x^{\rho+k-1}}{\rho+k-1}+\mathcal{O}_k(x^{k-2+\frac{4B+2}{3}+o(1)})\right) x^{-s-1}dx\\
    &=\dfrac{1}{(s-k)(k-1)!}+\sum_\rho\dfrac{r_k(\rho)}{s-\rho-k+1}+s\int_1^\infty \mathcal{O}_k(x^{k-2+\frac{4B+2}{3}+o(1)}) x^{-s-1} dx\\
    &+\dfrac{1}{k!}+\sum_\rho\dfrac{r_k(\rho)}{\rho+k-1}.
\end{align*}
From the above, the series $f_k(s)$  is analytic on  $\{\Re(s)>k\}$ and can be continued meromorphically to the haft plane $\{\Re(s)>k-2+ \frac{4B+2}{3}\}$.

\textit{Step 3:} We assume that $B<1$. Then we obtain 
\begin{align}\label{infB}
    k-1+B= \inf \{\sigma_0 \geq k-\dfrac{1}{2}: f_k(s)-\dfrac{1}{(s-k)(k-1)!} \text{ is analytic on }\Re(s) > \sigma_0\}.
\end{align}
We then prove \eqref{infB}. From step 2, the right-hand side of \eqref{infB} is at most $k-2+ \frac{4B+2}{3}\leq k-1+B$, since $B\leq 1$. 

For the reverse inequality, the right-hand side of \eqref{infB} is at least $k-\dfrac{1}{2}$, then \eqref{infB} is true if $B=\dfrac{1}{2}$. So we can assume $\dfrac{1}{2}<B<1$. Hence
\begin{align*}
    \max \{k-2+ \frac{4B+2}{3}, k-\dfrac{1}{2}\}<k-1+B.
\end{align*}
There exists $\epsilon>0$ such that \begin{align*}
    \max \{k-2+ \frac{4B+2}{3}, k-\dfrac{1}{2}\}<k-1+B-\epsilon.
\end{align*}
By the definition of $B$, there exists a non-trivial zero $\rho$ such that
\begin{align*}
    B-\epsilon<\Re(\rho).
\end{align*}
Thus we obtain
\begin{align*}
    k-\dfrac{1}{2}<k-1+B-\epsilon<\Re(\rho+k-1).
\end{align*}
We consider in the  haft plane $\{ \Re(s) >k-1+B-\epsilon\}$, $f_k(s)$ has a pole at $\rho+k-1$. So the right-hand side of \eqref{infB} $\geq k-1+B-\epsilon$. Let $\epsilon$ tend to $0$, the proof of $\eqref{infB}$ is completed.

\textit{Step 4:} Let $$E_k(X)=S_k(X)-\dfrac{X^k}{k!}.$$
Then by the assumption of this part, $E_k(X)\ll_k X^{k-\frac{1}{2}+\epsilon}$.

Moreover, 
\begin{align*}
    s\int_1^\infty E_k(x)x^{-s-1}dx= f_k(s)+\dfrac{s}{(s-k)k!}.
\end{align*}
Then 
\begin{align}\label{step4}
     f_k(s)-\dfrac{1}{(s-k)(k-1)!}=s\int_1^\infty E_k(x)x^{-s-1}dx+\dfrac{1}{k!}.
\end{align}
We note that the right-hand side of \eqref{step4} is analytic on
$\{ \Re(s) >k-\dfrac{1}{2}\}$. Then from Step 3, we obtain
$$k-1+B\leq k-\dfrac{1}{2},$$
$$B \leq \dfrac{1}{2}.$$
We conclude $B=\dfrac{1}{2}$, that means the Riemann Hypothesis is true. 

\textit{Step 5: } We prove that $B=1$ is impossible. 

In fact, by assumption,
$$S_k(X)=\dfrac{X^k}{k!}+\mathcal{O}_k(X^{k-\delta}),$$ with $\delta=\frac{1}{2}-\epsilon$. Applying Theorem \ref{3}, we obtain $B<1$. 
\section{Proof of the omega-result Theorem \ref{omega} }
To prove Theorem \ref{omega}, we use the idea in \cite{BP} that if an integer $n$ is divisible by many small primes, then $G_k(n)$ should be large. We just need to prove
$$G_k(n)=\Omega(n^{k-1}\log \log n).$$

Let  $q=\prod_{\substack{p<x\\p\nmid q_1}}p$ be the product of primes which are less than $x$ and not divisible by $q_1$, where $q_1$ is the exceptional modulus up to $q$ if there exists a Siegel's zero. From Lemma 4 \cite{BP}, for $(a,q)=1$ we have
\begin{align*}
    \psi(2x, q, a)=\sum_{\substack{n\leq 2x\\n=a (q)}}\Lambda(n) \geq \dfrac{x}{2\phi(q)}.
\end{align*}
Let $b$ be an integer coprime to $q$, then 
\begin{align*}
    \sum_{\substack{n\leq 4x\\ n=b(q)}}G_2(n)&\geq \sum_{(a,q)=1} \psi (2x,q,a)\psi(2x,q,b-a)\geq \dfrac{x^2}{4\phi(q)},\\
    \sum_{\substack{n\leq 6x\\ n=b(q)}}G_3(n)&\geq \sum_{(a,q)=1} \psi (2x,q,a)\sum_{\substack{n\leq 4x\\ n=b-a(q)}}G_2(n)\geq  \dfrac{x^3}{8\phi(q)},\dots,\\
    \sum_{\substack{n\leq 2(k-1)x\\ n=b(q)}}G_{k-1}(n)&\geq \sum_{(a,q)=1} \psi (2x,q,a)\sum_{\substack{n\leq 2(k-2)x\\ n=b-a(q)}}G_{k-2}(n)\geq  \dfrac{x^{k-1}}{2^{k-1}\phi(q)}.
\end{align*}
Thus,
\begin{align*}
    \sum_{\substack{n\leq 2kx\\q|n}}G_k(n)\geq \sum_{(a,q)=1} \psi (2x,q,a)\sum_{\substack{n\leq 2(k-1)x\\ n=q-a(q)}}G_{k-1}(n)\geq  \dfrac{x^{k}}{2^{k}\phi(q)}.
\end{align*}
 So that
\begin{align*}
    \dfrac{2kx}{q}\max_{n\leq 2kx}G_k(n) &\geq  \dfrac{x^{k}}{2^{k}\phi(q)}.
\end{align*}
Therefore, we obtain
\begin{align*}
    \max_{n\leq 2kx}G_k(n) &\geq \dfrac{x^{k-1}}{2^{k+1}}\dfrac{q}{\phi(q)}=\dfrac{x^{k-1}}{2^{k+1}}\prod_{p<x}\left(1-p^{-1}\right)^{-1}\prod_{p_1 | q_1}\left(1-p_1^{-1}\right)\\
    &\gg x^{k-1}\log \log x.
\end{align*}
\section*{Acknowledgement} 
I would like to thank Gautami Bhowmik for her guidance together with many helpful  comments during the preparation of this paper.

 \bibliographystyle{amsplain}
\bibliography{anbib}

\end{document}